\documentclass[11pt,reqno]{amsart}

\usepackage[T1]{fontenc}
\usepackage[utf8]{inputenc}
\usepackage[english]{babel}
\usepackage{amsmath,amssymb,amsthm,mathtools}
\usepackage[margin=2.8cm]{geometry}
\usepackage[colorlinks=true,citecolor=blue,linkcolor=blue,urlcolor=blue]{hyperref}
\usepackage[nameinlink,capitalise,noabbrev]{cleveref}

\newtheorem{theorem}{Theorem}[section]
\newtheorem{proposition}[theorem]{Proposition}
\newtheorem{lemma}[theorem]{Lemma}
\newtheorem{corollary}[theorem]{Corollary}
\theoremstyle{definition}
\newtheorem{example}[theorem]{Example}
\theoremstyle{remark}

\newcommand{\tr}{\operatorname{tr}}

\title[Odd cycles in integral graphs]{Spectral asymmetry and odd cycles in integral graphs}

\author{Rodrigo O. Braga}
\author{Jean Carlo Moraes}
\author{Matheus C. Santos}
\thanks{Instituto de Matem\'atica e Estat\'istica, Universidade Federal do Rio Grande do Sul, Porto Alegre, Brazil.}

\subjclass[2020]{05C50, 05C38, 15A18}
\keywords{Integral graph, unicyclic graph, odd cycle, signed graph, spectral symmetry, periodic quantum walk, replicator dynamics on networks}

\begin{document}

\begin{abstract}
We prove that the triangle $C_3$ is the only integral unicyclic graph with an odd cycle. The main tool is a spectral inequality that does not require integrality: if $G$ is a connected non-bipartite graph and $G-e$ is bipartite for some edge $e$, then the eigenvalues $\lambda_1\ge\cdots\ge\lambda_n$ of $G$ satisfy $\sum_{i}(\lambda_i+\lambda_{n+1-i})^+\le 2$ and $0<\lambda_1+\lambda_n\le 1$. It follows from comparing $A(G)$ with the signed adjacency matrix obtained by reversing the sign of $e$, which is similar to $-A(G)$, through a common rank-one perturbation and Weyl's monotonicity theorem. For integral graphs in this class, an arithmetic lemma then gives $\lambda_n=1-\lambda_1$ and at least $\lambda_1-1\ge1$ triangles, which yields the classification. We also prove that an integral graph of odd girth $g$ has at least $(g-1)!/2$ cycles of length $g$, with equality for the Petersen graph when $g=5$, and we derive consequences for quantum periodicity and for linearized evolutionary games on networks.
\end{abstract}

\maketitle

\section{Introduction}

For a finite simple graph $G$, let $A(G)$ denote its adjacency matrix. We say that $G$ is \emph{integral} if all eigenvalues of $A(G)$ are integers. The problem of characterising integral graphs dates back to Harary and Schwenk~\cite{HS}. Despite considerable progress, a complete characterization remains out of reach, even for restricted classes such as trees and unicyclic graphs, that is, connected graphs containing exactly one cycle. Integrality also has a dynamical meaning: it makes the continuous-time quantum walk $e^{-\mathrm{i}tA(G)}$ periodic, a property closely tied to perfect state transfer~\cite{Godsil,SSS}, while the adjacency spectrum determines the linearized frequencies of evolutionary games on networks~\cite{MM,MMS}; see \cref{sec:applications}.

Since cycle parity determines spectral symmetry, a natural starting point is the study of unicyclic graphs, whose unique cycle is either even, making the graph bipartite, or odd. For these graphs, Omidi~\cite{Omidi} proved that an integral unicyclic graph with no zero eigenvalue is $C_3$ or $C_6$. Using the eigenvalue location algorithm of~\cite{BRT}, Braga, Del-Vecchio and Rodrigues~\cite{BDR} constructed three infinite families of integral unicyclic graphs, all with a cycle of length four, and proposed two conjectures on integral generalized sun graphs (cycles with pendant paths attached). These conjectures were settled in~\cite{BMS}: no integral graph has a pendant path with three or more edges, and there are infinitely many integral generalized sun graphs whose cycle is a hexagon. In all known examples other than $C_3$ the cycle has even length, and the question of whether the cycle of an integral generalized sun graph that is not a cycle must have even length was left open in~\cite{BMS}.

We answer this question affirmatively, not only for generalized sun graphs but for all unicyclic graphs, without any assumption on the trees attached to the cycle.

\begin{theorem}\label{thm:main}
Let $G$ be a unicyclic graph whose unique cycle has odd length. If $G$ is integral, then $G\cong C_3$.
\end{theorem}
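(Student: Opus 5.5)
The plan follows the route sketched in the abstract. Let $G$ be unicyclic with odd cycle $C$ and let $e$ be an edge of $C$. Then $G$ is connected and non-bipartite, while $G-e$ is a tree and hence bipartite. The argument has three parts: a spectral inequality valid for any graph of this type, an arithmetic step that uses integrality, and a count of triangles.

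\textbf{Step 1 (spectral inequality).} Let $A=A(G)$, and let $A'$ be the signed adjacency matrix obtained by changing the sign of the entry for $e=uv$. Since $G-e$ is bipartite with bipartition $(X,Y)$, the signature matrix $D=\operatorname{diag}(\pm1)$ that is $+1$ on $X$ and $-1$ on $Y$ satisfies $DA'D=-A$. The reason is that every edge of $G-e$ joins $X$ to $Y$ and so changes sign. The edge $e$ joins two vertices in the same part, because $G$ is not bipartite, so it keeps the sign it had in $A'$, which is minus its sign in $A$. Hence $A'$ has spectrum $-\lambda_n\ge\cdots\ge-\lambda_1$. Moreover
\[
A-A'=2(E_{uv}+E_{vu})=(e_u+e_v)(e_u+e_v)^{T}-(e_u-e_v)(e_u-e_v)^{T}.
\]
Set $M=A+(e_u-e_v)(e_u-e_v)^T=A'+(e_u+e_v)(e_u+e_v)^T$. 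Then $M$ is a positive semidefinite rank-one perturbation of both $A$ and $A'$. By Weyl monotonicity and interlacing, the eigenvalues $\mu_1\ge\cdots\ge\mu_n$ of $M$ satisfy $\lambda_i\le\mu_i\le\lambda_{i-1}$ and $-\lambda_{n+1-i}\le\mu_i\le-\lambda_{n+2-i}$. Summing the differences $\mu_i-\lambda_i$ gives $\tr M-\tr A=2$, and similarly for $A'$. Combining these two trace identities with the interlacing should yield $\sum_i(\lambda_i+\lambda_{n+1-i})^+\le 2$ and $\lambda_1+\lambda_n\le1$. The strict inequality $\lambda_1+\lambda_n>0$ holds because a connected non-bipartite graph does not have a symmetric spectrum; in particular $-\lambda_1$ is not an eigenvalue.

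\textbf{Step 2 (integrality).} If $G$ is integral, then $\lambda_1+\lambda_n$ is an integer in $(0,1]$, so $\lambda_n=1-\lambda_1$. The inequality $\sum_i(\lambda_i+\lambda_{n+1-i})^+\le2$ then says that, apart from the extreme pair, the sums $\lambda_i+\lambda_{n+1-i}$ contribute at most $1$ in positive part. For the odd moment $\tr A^3=\sum_i\lambda_i^3=6t$, where $t$ is the number of triangles, I would pair the eigenvalues as $\lambda_i$ with $\lambda_{n+1-i}$. Then $\lambda_1^3+\lambda_n^3=\lambda_1^3-(\lambda_1-1)^3$. The remaining pairs should be handled with the positivity constraints together with the fact that the spectrum is nearly symmetric, and this should give $6t\ge$ something of order $\lambda_1$. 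The target is $t\ge\lambda_1-1$. I expect this step to be the main obstacle: cross terms from pairs with negative sums must be controlled, and one may need the odd-moment identity $\sum_i\lambda_i=0$ together with $\sum_i(\lambda_i+\lambda_{n+1-i})=0$ to bound the negative parts by the positive ones, which total at most $2$. The sum $\sum_i(\lambda_i+\lambda_{n+1-i})$ equals $2\tr A=0$, so the negative parts also total at most $2$. Each pair contributes $s(\lambda_i^2-\lambda_i\lambda_{n+1-i}+\lambda_{n+1-i}^2)$ to $\sum_i\lambda_i^3$, where $s$ is its sum, and I would estimate these weights against $\lambda_1^2$.

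\textbf{Step 3 (conclusion).} A unicyclic graph has at most one triangle. So $\lambda_1-1\le t\le1$, which forces $\lambda_1\le2$. If $t=0$, then $\lambda_1\le1$, so every component of $G$ is $K_2$ or $K_1$, contradicting the presence of a cycle. Otherwise the odd cycle is a triangle. A connected graph with $\lambda_1\le2$ that contains a cycle is exactly that cycle, since any proper connected supergraph of a cycle has index greater than $2$. Hence $G\cong C_3$, which is indeed integral with spectrum $2,-1,-1$.
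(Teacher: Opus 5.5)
Your route is the paper's: the same matrix $M=A+(e_u-e_v)(e_u-e_v)^T=A'+(e_u+e_v)(e_u+e_v)^T$, the same triangle count via $\tr A^3$, and the same endgame via strict monotonicity of the index. Step 3 is fine. In Step 1 you need to write out the one line you left as ``should yield''. From $\mu_i\ge\lambda_i$ and $\mu_i\ge-\lambda_{n+1-i}$ you get $(\lambda_i+\lambda_{n+1-i})^+\le\mu_i+\lambda_{n+1-i}$, and summing over $i$ gives at most $\tr M-\tr A'=2$. The upper interlacing bounds are not needed.

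The genuine gap is Step 2. This is where integrality is used, and you leave it as a heuristic. Your accounting is also off there. Write $d_i=\lambda_i+\lambda_{n+1-i}$. The terms $d_1=d_n=1$ are two separate summands of $\sum_{i=1}^n d_i^+$, so together they use up the whole budget of $2$. That forces $d_i\le 0$ for every $2\le i\le n-1$, not merely ``at most $1$ in positive part''.

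The missing idea is an integrality/parity reduction. Since $\sum_i d_i=0$, you have $\sum_{i=2}^{n-1}d_i=-2$. The interior terms group into reflected pairs $\{j,n+1-j\}$, each contributing $2d_j$, plus, for odd $n$, a central term $d_c=2\lambda_c$. These are even nonpositive integers summing to $-2$, so exactly one of them equals $-2$. Hence every other interior pair is exactly $\{x,-x\}$ and cancels in $\sum_i\lambda_i^3$. The remaining defect is a single pair $\{s,-s-1\}$, or $\lambda_c=-1$, which is the case $s=0$. The ordering gives $s\ge 0$, and $-s-1\ge\lambda_n=1-\rho$ gives $s\le\rho-2$. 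Therefore
\[
\sum_i\lambda_i^3=\rho^3+(1-\rho)^3+s^3+(-s-1)^3=3\bigl(\rho(\rho-1)-s(s+1)\bigr)\ge 6(\rho-1).
\]

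Your proposed weight estimates cannot replace this. A negative pair's weight $a^2-ab+b^2$ can be as large as $3\rho^2-9\rho+7$ (take $s=\rho-2$). That is the same order as the main term $3\rho^2-3\rho+1$. Only the exact cancellation of the symmetric pairs, together with the sharp constraint $s\le\rho-2$, leaves a bound linear in $\rho$. Without it you do not get $t\ge\rho-1$, and Step 3 has nothing to stand on.
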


In other words, every integral unicyclic graph other than $C_3$ is bipartite, and therefore has a spectrum symmetric about zero \cite{BH}.

Throughout, the eigenvalues of a real symmetric matrix $X$ of order $n$ are listed with multiplicity in nonincreasing order, $\lambda_1(X)\ge\cdots\ge\lambda_n(X)$, and $\lambda_i(G)=\lambda_i(A(G))$. We write $t^+=\max\{t,0\}$ for the positive part of a real number $t$. The proof of \cref{thm:main} rests on the following inequality, which does not require integrality and may be of independent interest.

\begin{theorem}[Spectral asymmetry]\label{thm:asym}
Let $G$ be a connected non-bipartite graph on $n$ vertices having an edge $e$ such that $G-e$ is bipartite, and define $d_i=\lambda_i(G)+\lambda_{n+1-i}(G)$ for $1\le i\le n$. Then
\begin{equation}\label{eq:budget}
\sum_{i=1}^n d_i^+\le 2,\qquad \sum_{i=1}^n|d_i|\le 4,\qquad 0<\lambda_1(G)+\lambda_n(G)\le 1 .
\end{equation}
This applies to every unicyclic graph whose unique cycle has odd length, since removing any edge of the cycle leaves a tree. All three upper bounds are attained by $C_3$, whose spectrum is $\{2,-1,-1\}$.
\end{theorem}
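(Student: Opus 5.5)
The plan is to realise $-A(G)$ as a signed adjacency matrix, compare it with $A(G)$ through a common rank-one perturbation, and read all three bounds off Weyl's monotonicity theorem and a trace count.

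\textbf{Setup.} Write $e=uv$ and $A=A(G)$. Let $B=A(G-e)$ and $E=e_ue_v^{T}+e_ve_u^{T}$, so that $A=B+E$. Let $A'=B-E$ be the signed adjacency matrix obtained by reversing the sign of $e$.

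Since $G-e$ is bipartite with parts $X,Y$ and $G$ is not, the endpoints $u,v$ lie in the same part. Let $D$ be the diagonal matrix with entries $+1$ on $X$ and $-1$ on $Y$. Then $DBD=-B$, and $DED=E$ because $u,v$ carry the same sign. Hence
\[
DA'D=-B-E=-A .
\]
So $A'$ is similar to $-A$, and $\lambda_i(A')=-\lambda_{n+1-i}(A)$. Consequently
\[
d_i=\lambda_i(A)-\lambda_i(A') .
\]

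\textbf{Common rank-one perturbation.} Put $M=B+e_ue_u^{T}+e_ve_v^{T}$. A direct check gives
\[
M=A+(e_u-e_v)(e_u-e_v)^{T}=A'+(e_u+e_v)(e_u+e_v)^{T}.
\]
Thus $M$ is obtained from each of $A$ and $A'$ by adding a positive semidefinite rank-one matrix. By Weyl's monotonicity theorem, the numbers
\[
a_i=\lambda_i(M)-\lambda_i(A'),\qquad b_i=\lambda_i(M)-\lambda_i(A)
\]
are all nonnegative. Taking traces, $\tr M-\tr A=\tr M-\tr A'=2$, so $\sum_i a_i=\sum_i b_i=2$.

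\textbf{First two bounds.} We have $d_i=a_i-b_i$ with $a_i,b_i\ge 0$. Hence $d_i^+\le a_i$ and $|d_i|\le a_i+b_i$. Summing over $i$ gives $\sum_i d_i^+\le 2$ and $\sum_i|d_i|\le 4$.

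\textbf{Third bound.} Note that $d_i=d_{n+1-i}$ for every $i$. Since $G$ is non-bipartite we have $n\ge 3$, so $1\neq n$ and $d_1^+$ appears twice in $\sum_i d_i^+$. This gives $2d_1^+\le 2$, that is, $\lambda_1+\lambda_n\le 1$.

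For strict positivity, the Perron--Frobenius theorem gives $|\lambda_n|\le\lambda_1$. For a connected graph, equality $\lambda_n=-\lambda_1$ holds if and only if the graph is bipartite. Since $G$ is connected and non-bipartite, $\lambda_1+\lambda_n>0$.

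\textbf{Remaining claims.} The unicyclic case is immediate: deleting an edge of the odd cycle leaves a spanning tree. For $C_3$ the spectrum is $\{2,-1,-1\}$, so $d=(1,-2,1)$. This gives $\sum_i d_i^+=2$, $\sum_i|d_i|=4$ and $d_1=1$, so all three bounds are attained.

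\textbf{Main obstacle.} The only delicate step is finding the matrix $M$ that sits above both $A$ and $A'$ with the same trace increment. Once it is identified, the rest is bookkeeping, together with the symmetry $d_i=d_{n+1-i}$ that improves $d_1\le 2$ to $d_1\le 1$.
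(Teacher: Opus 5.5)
Your proposal is correct and is essentially the paper's proof. It uses the same signature matrix $D$ making the edge-reversed signed matrix similar to $-A$, the same common upper bound $M=A+(e_u-e_v)(e_u-e_v)^{T}=A'+(e_u+e_v)(e_u+e_v)^{T}$ with trace $2$ fed into Weyl monotonicity, and the same symmetry $d_1=d_n$ with Perron--Frobenius for the third bound. The only cosmetic difference is that you get $\sum_i|d_i|\le 4$ from $|d_i|\le a_i+b_i$, whereas the paper gets it from $\sum_i d_i=0$.
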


A graph is bipartite if and only if its spectrum is symmetric about zero, that is, all $d_i$ vanish~\cite{BH}. For a connected non-bipartite graph, the Perron--Frobenius theorem gives $d_1=d_n=\lambda_1+\lambda_n>0$. \Cref{thm:asym} shows that if the removal of a single edge makes $G$ bipartite, this asymmetry is bounded: $\lambda_1+\lambda_n\le 1$ and $\sum_i d_i^+\le 2$, regardless of the order of $G$ and of its number of cycles. The quantity $\lambda_1+\lambda_n$ for graphs without short odd cycles has recently been studied in~\cite{ATV}.


\section{Spectral asymmetry and the proof of the main theorem}\label{sec:proof}

We write $X\succeq Y$ when $X-Y$ is positive semidefinite.

\begin{lemma}\label{lem:weyl}
Let $X$, $Y$, $M$ be real symmetric matrices of order $n$ with $M\succeq X$ and $M\succeq Y$. Then
$\sum_{i=1}^n\bigl(\lambda_i(X)-\lambda_i(Y)\bigr)^+\le\tr M-\tr Y$.
\end{lemma}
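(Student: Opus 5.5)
Since $M\succeq X$, Weyl's monotonicity theorem gives $\lambda_i(M)\ge\lambda_i(X)$ for every $i$. The plan is to insert $\lambda_i(M)$ between $\lambda_i(X)$ and $\lambda_i(Y)$ and bound the positive parts termwise. Writing
\[
\lambda_i(X)-\lambda_i(Y)=\bigl(\lambda_i(X)-\lambda_i(M)\bigr)+\bigl(\lambda_i(M)-\lambda_i(Y)\bigr),
\]
the first bracket is nonpositive, so $\bigl(\lambda_i(X)-\lambda_i(Y)\bigr)^+\le\bigl(\lambda_i(M)-\lambda_i(Y)\bigr)^+$. This uses the elementary fact that $(a+b)^+\le b^+$ whenever $a\le 0$.

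Next, $M\succeq Y$ together with Weyl's monotonicity theorem gives $\lambda_i(M)-\lambda_i(Y)\ge 0$ for every $i$, so the positive part can be dropped. Summing over $i$, we get
\[
\sum_{i=1}^n\bigl(\lambda_i(X)-\lambda_i(Y)\bigr)^+\le\sum_{i=1}^n\bigl(\lambda_i(M)-\lambda_i(Y)\bigr)=\tr M-\tr Y,
\]
because the trace is the sum of the eigenvalues counted with multiplicity.

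There is no real obstacle here. The only point needing care is to apply Weyl's monotonicity theorem with the nonincreasing ordering of eigenvalues fixed in the paper, so that the comparison $\lambda_i(M)\ge\lambda_i(X)$ holds index by index. That theorem states that $A\succeq B$ implies $\lambda_i(A)\ge\lambda_i(B)$ for all $i$, and it follows from the Courant--Fischer min-max characterization. In the intended application $M$ will be a rank-one positive perturbation of both $A(G)$ and the signed matrix similar to $-A(G)$, so $\tr M-\tr Y$ is simply the trace of that rank-one term.
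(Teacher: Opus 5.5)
Your proposal is correct and follows essentially the same route as the paper: use Weyl's monotonicity to get $\lambda_i(M)\ge\lambda_i(X)$ and $\lambda_i(M)\ge\lambda_i(Y)$, conclude $\bigl(\lambda_i(X)-\lambda_i(Y)\bigr)^+\le\lambda_i(M)-\lambda_i(Y)$ termwise, and sum using the trace. Your decomposition through $(a+b)^+\le b^+$ for $a\le 0$ is just a slightly more explicit version of the paper's one-line termwise bound.
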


\begin{proof}
By Weyl's monotonicity theorem~\cite[Section~4.3]{HJ}, $\gamma_i=\lambda_i(M)$ satisfies $\gamma_i\ge\lambda_i(X)$ and $\gamma_i\ge\lambda_i(Y)$, so $\bigl(\lambda_i(X)-\lambda_i(Y)\bigr)^+\le\gamma_i-\lambda_i(Y)$ for every $i$. Summing over $i$ gives the claim, since the trace is the sum of the eigenvalues.
\end{proof}

We shall use the following standard facts for a connected graph $G$ with adjacency matrix $A$; see~\cite{BH}.
\begin{enumerate}
\item[(P1)] $\tr A=0$, and $\tr A^3=6t(G)$, where $t(G)$ is the number of triangles of $G$.
\item[(P2)] (Perron--Frobenius) $\lambda_1(G)$ is the spectral radius of $A$, so $\lambda_1(G)\ge|\lambda_n(G)|$. Moreover $\lambda_n(G)=-\lambda_1(G)$ if and only if $G$ is bipartite.
\item[(P3)] $\lambda_1(G)=\max_{z\ne 0}z^{T}Az/z^{T}z$.
\item[(P4)] If $H$ is a proper subgraph of $G$, then $\lambda_1(H)<\lambda_1(G)$.
\end{enumerate}

\begin{proof}[Proof of \cref{thm:asym}]
Write $e=v_1v_2$ and $A=A(G)$. Choose a proper $2$-colouring $s_v\in\{1,-1\}$ of the bipartite graph $G-e$, so that $s_us_v=-1$ whenever $uv$ is an edge of $G-e$, and set $D=\operatorname{diag}(s_v)$. Then $s_{v_1}=s_{v_2}$, since otherwise $s$ would be a proper $2$-colouring of $G$, contradicting the assumption that $G$ is not bipartite.

Put $B=-DAD$, so that $B_{uv}=-s_us_vA_{uv}$. If $uv$ is an edge of $G-e$, then $B_{uv}=A_{uv}=1$. On the other hand, $B_{v_1v_2}=B_{v_2v_1}=-1$, while all other entries of $A$ and $B$ are zero. Thus
\begin{equation}\label{eq:B}
B=A-2\bigl(e_{v_1}e_{v_2}^{T}+e_{v_2}e_{v_1}^{T}\bigr),
\end{equation}
where $e_v$ denotes the standard basis vector corresponding to $v$. In other words, $B$ is the adjacency matrix of the signed graph obtained from $G$ by assigning a negative sign to the edge $e$.
Since $D^2=I$, the matrix $B$ is similar to $-A$, and hence
\begin{equation}\label{eq:beta}
\lambda_i(B)=-\lambda_{n+1-i}(G)\qquad(1\le i\le n).
\end{equation}

Now let $w_-=e_{v_1}-e_{v_2}$ and $w_+=e_{v_1}+e_{v_2}$. Using \eqref{eq:B}, we obtain
\[
M:=A+w_-w_-^{T}=A+e_{v_1}e_{v_1}^{T}+e_{v_2}e_{v_2}^{T}-e_{v_1}e_{v_2}^{T}-e_{v_2}e_{v_1}^{T}=B+w_+w_+^{T}.
\]
Since $w_\pm w_\pm^{T}\succeq 0$, it follows that $M\succeq A$ and $M\succeq B$. Moreover, both $A$ and $B$ have zero diagonal, so $\tr A=\tr B=0$ and $\tr M=\tr A + \|w_-\|^2=2$. Applying \cref{lem:weyl} with $X=A$, $Y=B$, and using \eqref{eq:beta}, gives
\[
\sum_{i=1}^n d_i^+=\sum_{i=1}^n\bigl(\lambda_i(G)-\lambda_i(B)\bigr)^+\le\tr M-\tr B=2 .
\]
Furthermore, $\sum_i d_i=2\tr A=0$, so the positive and negative parts of the $d_i$ have the same total size, and $\sum_i|d_i|=2\sum_i d_i^+\le 4$. Since $G$ is connected and not bipartite, (P2) gives $d_1=d_n=\lambda_1(G)+\lambda_n(G)>0$, hence $2(\lambda_1+\lambda_n)=d_1+d_n\le\sum_id_i^+\le 2$. For $C_3$ we have $d_1=d_3=1$ and $d_2=-2$, so all three upper bounds are attained.
\end{proof}

The next lemma, which involves no graph theory, shows how integrality turns this bound into a constraint on the sum of cubes.

\begin{lemma}\label{lem:arith}
Let $n\ge 3$ and let $a_1\ge\cdots\ge a_n$ be integers such that
\begin{equation}\label{eq:hyp}
r:=a_1\ge 2,\qquad \sum_{i=1}^na_i=0,\qquad a_n>-r,
\end{equation}
and, writing $\delta_i=a_i+a_{n+1-i}$, assume that $\sum_{i=1}^n\delta_i^+\le 2$. Then $a_n=1-r$, and
\begin{equation}\label{eq:cubes}
\sum_{i=1}^na_i^3\ge 6(r-1).
\end{equation}
\end{lemma}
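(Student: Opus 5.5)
The plan is to use the hypothesis $a_n>-r$ to pin down $\delta_1$, which forces the whole sequence $(\delta_i)$ into a very rigid shape. The sum of cubes can then be computed pair by pair.

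\emph{Step 1: $a_n=1-r$.} Since the $a_i$ are integers and $a_n>-r$, we have $\delta_1=\delta_n=r+a_n\ge 1$. If $n\ge 2$ the indices $1$ and $n$ are distinct, so
\[
2\le \delta_1^++\delta_n^+\le\sum_{i=1}^n\delta_i^+\le 2 .
\]
Hence $\delta_1=\delta_n=1$, that is, $a_n=1-r$. Moreover $\delta_i\le 0$ for every $2\le i\le n-1$.

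\emph{Step 2: shape of the middle $\delta_i$.} From $\sum_i a_i=0$ we get $\sum_i\delta_i=2\sum_ia_i=0$. Combined with $\delta_1+\delta_n=2$, this gives $\sum_{i=2}^{n-1}\delta_i=-2$, a sum of nonpositive integers. The sequence satisfies $\delta_i=\delta_{n+1-i}$, so each value from a genuine pair $\{i,n+1-i\}$ with $i\ne n+1-i$ is counted twice. A central index, present only when $n$ is odd, has $\delta=2a_{(n+1)/2}$, which is even. Therefore exactly one of the following holds, with all other middle $\delta_i$ equal to $0$:
\begin{itemize}
\item[(A)] there is a single pair $i<j=n+1-i$ with $a_i+a_j=-1$;
\item[(B)] $n$ is odd and the central entry equals $-1$.
\end{itemize}

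\emph{Step 3: the sum of cubes.} I would write $\sum_ia_i^3$ as a sum over the pairs $\{i,n+1-i\}$, plus the central term if $n$ is odd. Every pair with $\delta=0$ contributes $x^3+(-x)^3=0$, and a central entry equal to $0$ contributes nothing. The outer pair contributes
\[
r^3+(1-r)^3=3r^2-3r+1 .
\]
In case (B) the total is $3r^2-3r+1-1=3r(r-1)\ge 6(r-1)$, because $r\ge 2$.

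In case (A), write the pair as $a$ and $-1-a$ with $a\ge 0$. Its contribution is $a^3-(1+a)^3=-(3a^2+3a+1)$. The ordering gives $-1-a\ge a_n=1-r$, so $a\le r-2$. Since $3a^2+3a$ is increasing for $a\ge 0$, the total is
\[
3(r^2-r-a^2-a)\ge 3\bigl(r^2-r-(r-2)^2-(r-2)\bigr)=6(r-1).
\]
This proves \eqref{eq:cubes}.

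The main obstacle is the bookkeeping in Step 2. One has to be careful that symmetric pairs are double-counted while a central index is not, and that the parity of $2a_{(n+1)/2}$ rules out a central $\delta$ of $-1$. One also needs the bound $a\le r-2$ in case (A), which comes from the ordering together with the value $a_n=1-r$ obtained in Step 1. Once these points are settled, the remaining computations are routine.
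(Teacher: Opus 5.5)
Your proposal is correct and follows essentially the same route as the paper: you force $\delta_1=\delta_n=1$, use the parity of the reflected-pair and central contributions to isolate a single deficit of $-1$, and bound the cube sum via $0\le a\le r-2$. The paper folds your case (B) into case (A) as the value $s=0$ rather than treating it separately, and it spells out the step $a\ge 0$, which you assert without comment; it follows from $a\ge -1-a$ by the ordering.
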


\begin{proof}
Note that $\delta_i=\delta_{n+1-i}$ for every $i$. Since $r+a_n$ is a positive integer, $\delta_1=\delta_n\ge 1$, and the hypothesis $\sum_i\delta_i^+\le 2$ forces
\[
\delta_1=\delta_n=1,\qquad \delta_i\le 0\quad(2\le i\le n-1).
\]
In particular $a_n=1-r$. Moreover $\sum_i\delta_i=2\sum_ia_i=0$, so $\sum_{i=2}^{n-1}\delta_i=-2$.

The interior indices $2\le i\le n-1$ split into reflected pairs $\{j,n+1-j\}$ with $2\le j<(n+1)/2$, each contributing $2\delta_j$ to this sum, and, when $n$ is odd, the central index $c=(n+1)/2$, contributing $\delta_c=2a_c$. All these contributions are even nonpositive integers adding up to $-2$, so exactly one of them equals $-2$ and the others vanish. Thus either (a) $n$ is odd, $a_c=-1$, and every interior reflected pair has $\delta_j=0$; or (b) there is exactly one $j$ with $2\le j<(n+1)/2$ and $\delta_j=-1$, every other interior reflected pair has $\delta=0$, and $a_c=0$ if $n$ is odd. A reflected pair with $\delta_j=0$ consists of two opposite integers and contributes nothing to $\sum_ia_i^3$. In case (b) put $s=a_j$, so that $a_{n+1-j}=-s-1$. The ordering $a_j\ge a_{n+1-j}$ gives $s\ge -1/2$, hence $s\ge 0$, and $a_{n+1-j}\ge a_n=1-r$ gives $s\le r-2$. Case (a) is the same computation with $s=0$, since $0^3+(-1)^3=s^3+(-s-1)^3$ for $s=0$. In both cases, with $0\le s\le r-2$,
\[
\sum_{i=1}^na_i^3=r^3+(1-r)^3+s^3+(-s-1)^3=3\bigl(r(r-1)-s(s+1)\bigr).
\]
Since $s(s+1)\le(r-2)(r-1)$, we obtain $\sum_ia_i^3\ge 3\bigl(r(r-1)-(r-2)(r-1)\bigr)=6(r-1)$.
\end{proof}

Combining \cref{thm:asym} with \cref{lem:arith} gives the following triangle bound, of which \cref{thm:main} is a special case.

\begin{theorem}\label{thm:triangles}
Let $G$ be a connected integral non-bipartite graph such that $G-e$ is bipartite for some edge $e$. Set $\rho=\lambda_1(G)$ and let $t(G)$ be its number of triangles. Then $\lambda_n(G)=1-\rho$ and $t(G)\ge\rho-1\ge1$. Moreover, $t(G)=1$ if and only if $G\cong C_3$.
\end{theorem}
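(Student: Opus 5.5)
The plan is to feed the eigenvalues of $G$ into \cref{lem:arith}, setting $a_i=\lambda_i(G)$; these are integers because $G$ is integral. First check the hypotheses in \eqref{eq:hyp}. The trace condition $\sum_i a_i=\tr A=0$ is (P1). The condition $\sum_i\delta_i^+\le 2$ is exactly the first inequality of \cref{thm:asym}. From (P2) we have $a_n\ge-\rho$ with equality only for bipartite graphs, so $a_n>-\rho$.

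It remains to check $r=\rho\ge 2$ and $n\ge 3$. Since $G$ is non-bipartite, it contains an odd cycle, and hence $n\ge 3$. A shortest odd cycle $C_g$ is a subgraph with $\lambda_1(C_g)=2$. If $G\ne C_g$, then (P4) gives $\rho>2$; if $G=C_g$, then $\rho=2$. Either way $\rho\ge 2$. Alternatively, \cref{thm:asym} gives $0<\rho+\lambda_n\le1$, so integrality forces $\lambda_n=1-\rho$ directly. Since $\lambda_n<0$ (the trace is $0$ and $G$ has an edge), we get $\rho\ge2$ as well.

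\cref{lem:arith} then yields $\lambda_n=1-\rho$ and $\tr A^3=\sum_i\lambda_i^3\ge 6(\rho-1)$. By (P1), $6t(G)\ge 6(\rho-1)$, so $t(G)\ge\rho-1\ge1$.

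For the equality case, $C_3$ clearly has exactly one triangle. Conversely, suppose $t(G)=1$. The bound forces $\rho\le 2$, and combined with $\rho\ge2$ we get $\rho=2$. The triangle $C_3$ is a subgraph of $G$ with $\lambda_1(C_3)=2$. If $G$ were strictly larger, (P4) would give $\rho>2$, a contradiction. Hence $G=C_3$. Here "proper subgraph" is understood to include adding vertices or edges, so connectivity of $G$ lets us apply (P4) directly.

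The main obstacle is simply verifying the hypotheses of \cref{lem:arith}, in particular the strict inequality $a_n>-r$ and the bound $r\ge2$. Both follow cleanly from (P2), (P4) and the third inequality of \cref{thm:asym}. Everything else is bookkeeping.
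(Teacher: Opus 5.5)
Your proposal is correct and follows essentially the same route as the paper: you verify the hypotheses of \cref{lem:arith} using (P1), (P2) and \cref{thm:asym}, convert the cube bound into $t(G)\ge\rho-1$ via $\tr A^3=6t(G)$, and settle the equality case with (P4). The only differences are minor: the paper obtains $\rho\ge2$ from the Rayleigh quotient (P3) applied to the indicator vector of an odd cycle, whereas you use (P4), or alternatively the third inequality of \cref{thm:asym} together with $\lambda_n<0$; you also check $n\ge3$ explicitly, which the paper leaves implicit.
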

\begin{proof}
Write $A=A(G)$. Since $G$ is non-bipartite, it contains an odd cycle of some length $g$. Let $z$ be the indicator vector of the vertices of this cycle. Then $z^Tz=g$ and $z^TAz\ge2g$, so (P3) gives $\rho\ge2$.

The eigenvalues are integers because $G$ is integral. Their sum is zero by (P1), and $\lambda_n(G)>-\rho$ by (P2), since $G$ is connected and non-bipartite. Finally, \cref{thm:asym} gives $\sum_i(\lambda_i(G)+\lambda_{n+1-i}(G))^+\le2$. Thus all hypotheses of \cref{lem:arith} hold with $a_i=\lambda_i(G)$ and $r=\rho$. It follows that $\lambda_n(G)=1-\rho$ and $\sum_i\lambda_i(G)^3\ge6(\rho-1)$. Using (P1), we obtain $6t(G)=\tr A^3=\sum_i\lambda_i(G)^3\ge6(\rho-1)$, hence $t(G)\ge\rho-1\ge1$.

If $t(G)=1$, the preceding inequality gives $\rho\le2$, so $\rho=2$.   {Since $G$ contains a triangle, whose spectral radius is $2$, (P4) implies that this triangle cannot be a proper subgraph of $G$. 
Thus $G\cong C_3$. The converse is immediate.}
\end{proof}

\begin{proof}[Proof of \cref{thm:main}]
Since its unique cycle is odd, $G$ is non-bipartite, 
and deleting any edge of this cycle leaves a tree, which is bipartite. Therefore, $G$ satisfies the hypotheses of \cref{thm:triangles}. As $G$ is unicyclic, $t(G)\le1$, whereas the theorem gives $t(G)\ge1$. Hence $t(G)=1$ and consequently $G\cong C_3$.
\end{proof}

\section{Integral graphs with odd girth}\label{sec:remarks}

We now prove a divisibility property of spectral moments valid for all integral graphs. Recall that the \emph{odd girth} of a non-bipartite graph is the length of its shortest odd cycle, and write $c_g(H)$ for the number of cycles of length $g$ in a graph $H$.

\begin{proposition}\label{prop:div}
Let $H$ be an integral graph with odd girth $g$. Then $(g-1)!$ divides $2c_g(H)$.
\end{proposition}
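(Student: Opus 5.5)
We will compute $\tr A^g$ for $A=A(H)$ in two ways, once from the spectrum and once from closed walks. Spectrally, $\tr A^g=\sum_i\lambda_i^g$ with every $\lambda_i\in\mathbb Z$. Combinatorially, $\tr A^g$ counts closed walks of length $g$. Since $g$ is odd, every closed walk of odd length $g$ contains an odd cycle of length at most $g$. As $g$ is the odd girth, that cycle must have length exactly $g$ and must use all $g$ steps of the walk. So every closed walk of length $g$ traverses a $g$-cycle. Each $g$-cycle yields exactly $2g$ such walks (choice of starting vertex and direction), so
\[
\tr A^g=\sum_{i=1}^n\lambda_i^g \;+\; \text{(contribution of non-cycle closed walks)},
\]
and the claim is that the second term vanishes. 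This needs a check: a closed walk of odd length $g$ that is not a simple cycle decomposes into closed walks, at least one of odd length $<g$, and that piece contains an odd cycle shorter than $g$, a contradiction. Hence $\tr A^g=2g\,c_g(H)$.

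The plan is therefore to show that $g!$ divides $\sum_i\lambda_i^g$, which gives $(g-1)!\mid 2c_g(H)$. Here pure integrality of the $\lambda_i$ is not enough, since $\sum\lambda_i^g$ need not be divisible by $g!$ for arbitrary integers. We must use the lower odd moments. By the same walk argument, for every odd $k<g$ there are no closed walks of length $k$, so $p_k:=\sum_i\lambda_i^k=\tr A^k=0$ for all odd $k<g$. We therefore want an integer-valued polynomial identity. Consider the polynomial $x^g$ modulo the odd polynomials of lower degree. The odd polynomial
\[
P(x)=x(x^2-1)(x^2-4)\cdots\bigl(x^2-\tfrac{(g-1)^2}{4}\bigr)=\prod_{j=-(g-1)/2}^{(g-1)/2}(x-j)
\]
has degree $g$, is monic, and is odd. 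So $x^g-P(x)$ is an odd polynomial of degree $<g$, namely a combination of $x,x^3,\dots,x^{g-2}$. Thus $\sum_i\lambda_i^g=\sum_i P(\lambda_i)+\sum_{k\text{ odd}<g}c_k p_k=\sum_i P(\lambda_i)$.

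Finally, $P(m)$ is a product of $g$ consecutive integers, $m-\tfrac{g-1}{2},\dots,m+\tfrac{g-1}{2}$, and is therefore divisible by $g!$ for every integer $m$. Hence $g!\mid\tr A^g=2g\,c_g(H)$, giving $(g-1)!\mid 2c_g(H)$.

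The main obstacle is making the walk argument rigorous. Closed walks of odd length $k\le g$ containing no odd cycle shorter than $g$ must be exactly $g$-cycles when $k=g$, and cannot exist when $k<g$. The standard decomposition handles this: split the walk at the first repeated vertex into two closed walks of lengths summing to $k$, one of which is odd, and induct. The rest of the argument is routine algebra.
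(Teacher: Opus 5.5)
Your proof is correct and follows the paper's argument essentially step for step. Walk-splitting gives $\tr A(H)^k=0$ for odd $k<g$ and $\tr A(H)^g=2g\,c_g(H)$, and the odd monic polynomial $\prod_{j=-m}^{m}(x-j)$ with $g=2m+1$ turns $\tr A(H)^g$ into a sum of products of $g$ consecutive integers, hence a multiple of $g!$. The only slip is in your first display: the term $\sum_i\lambda_i^g$ there should read $2g\,c_g(H)$, the number of closed walks that traverse a $g$-cycle.
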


\begin{proof}
Every closed walk of odd length $\ell$ contains an odd cycle of length at most $\ell$: if the walk has a repetition other than the coincidence of its initial and final vertices, splitting it at the repetition gives two shorter closed walks whose lengths add up to $\ell$, one of which is odd; iterating, one reaches an odd closed walk without repeated vertices other than its endpoints, that is, an odd cycle. Consequently $\tr A(H)^k=0$ for odd $k<g$, and a closed walk of length $g$ has no repeated vertex other than its endpoints, so it traverses a $g$-cycle exactly once. Each $g$-cycle gives $2g$ such walks (choice of starting vertex and orientation), so $\tr A(H)^g=2g\,c_g(H)$.

Write $g=2m+1$ and $Q(x)=\prod_{k=-m}^{m}(x-k)=x\prod_{k=1}^m(x^2-k^2)$. For every integer $a$, $Q(a)$ is a product of $g$ consecutive integers, hence divisible by $g!$. Moreover $Q$ is odd and monic of degree $g$, so $Q(x)=x^g+\sum_{j=1}^mb_jx^{g-2j}$ with integers $b_j$ and odd exponents $1\le g-2j<g$. Summing over the integer eigenvalues $\theta_1,\dots,\theta_N$ of $A(H)$,
\[
\sum_{i=1}^NQ(\theta_i)=\tr A(H)^g+\sum_{j=1}^mb_j\tr A(H)^{g-2j}=2g\,c_g(H).
\]
Since the left-hand side is divisible by $g!$, dividing by $g$ yields the claim.
\end{proof}

Since $H$ has at least one cycle of length $g$, the quotient $2c_g(H)/(g-1)!$ is a positive integer. This gives the following lower bound.

\begin{corollary}\label{cor:many}
Let $H$ be an integral graph with odd girth $g$. Then
\[
c_g(H)\ge\frac{(g-1)!}{2}.
\]
In particular, an integral graph with odd girth $5$ has at least $12$ pentagons, and one with odd girth $7$ has at least $360$ heptagons.
\end{corollary}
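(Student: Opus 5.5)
The corollary follows almost immediately from \cref{prop:div}, so the plan is short: combine divisibility with positivity, then evaluate the bound at $g=5$ and $g=7$.

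First, I record that $c_g(H)\ge 1$. By definition of odd girth, $H$ is non-bipartite and has an odd cycle of length exactly $g$, so $c_g(H)$ is a positive integer.

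Second, I invoke \cref{prop:div}, which says that $(g-1)!$ divides $2c_g(H)$. A positive integer divisible by $(g-1)!$ is at least $(g-1)!$. Hence $2c_g(H)\ge (g-1)!$, which is the claimed inequality $c_g(H)\ge (g-1)!/2$.

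Third, for the specific cases I evaluate $(g-1)!/2$: it equals $4!/2=12$ for $g=5$ and $6!/2=360$ for $g=7$, matching the stated counts of pentagons and heptagons.

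There is no real obstacle here, since all the work is in \cref{prop:div}. The only point to check is that the quotient $2c_g(H)/(g-1)!$ is strictly positive, which rests on the existence of at least one $g$-cycle and so on $H$ being non-bipartite. That is implicit in the phrase ``integral graph with odd girth $g$''. I would remark, without proof, that equality at $g=5$ is attained by the Petersen graph, which has spectrum $\{3,1^5,(-2)^4\}$ and exactly $12$ pentagons. This shows the bound cannot be improved in general.
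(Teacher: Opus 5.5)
Your proposal is correct and matches the paper's argument. The paper also observes that $H$ contains at least one $g$-cycle, so $2c_g(H)/(g-1)!$ is a positive integer by \cref{prop:div}; this gives the bound, and the Petersen graph shows sharpness at $g=5$.
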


For $g=5$, integrality forces the number of pentagons in a triangle-free graph to be a multiple of $12$.

\begin{example}\label{ex:petersen}
The Petersen graph $\mathcal P$ has spectrum $\{3,\,1^{[5]},\,(-2)^{[4]}\}$, so it is integral. It has girth $5$, hence odd girth $5$, and it contains exactly $12$ pentagons. Thus $c_5(\mathcal P)=12=4!/2$, and equality holds in \cref{cor:many}, which is therefore sharp for $g=5$.
\end{example}

\Cref{cor:many} also yields a bound in terms of the cyclomatic number $k=|E(H)|-|V(H)|+\omega(H)$, where $\omega(H)$ is the number of connected components. The cycle space $\mathcal C(H)$ over $\mathbb F_2$ has dimension $k$, and the parity map $F\mapsto|F|\bmod 2$ is linear on it, since addition corresponds to symmetric difference of edge sets. If $H$ is non-bipartite this map is nonzero, so exactly $2^{k-1}$ elements of $\mathcal C(H)$ have odd cardinality. Distinct odd cycles are distinct elements of $\mathcal C(H)$, hence $c_g(H)\le 2^{k-1}$, and \cref{cor:many} gives the following result.

\begin{corollary}\label{cor:fewcycles}
Every integral non-bipartite graph with cyclomatic number at most $4$ contains a triangle. More generally, an integral graph with cyclomatic number $k$ and odd girth $g$ satisfies $2^k\ge(g-1)!$.
\end{corollary}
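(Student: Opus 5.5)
The plan is to combine the upper bound $c_g(H)\le 2^{k-1}$, derived in the paragraph just before the statement, with the lower bound of \cref{cor:many}. First I would dispose of the bipartite case. If $H$ has no odd cycle, then odd girth is undefined and the ``more generally'' statement is vacuous. So assume $H$ is non-bipartite, with odd girth $g$ and cyclomatic number $k$.

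For the upper bound, I would make the cycle-space argument precise. Over $\mathbb F_2$, the space $\mathcal C(H)$ has dimension $k$, and the parity map $F\mapsto |F|\bmod 2$ is a linear functional on it. Because $H$ has an odd cycle, this functional is nonzero, so its nontrivial fibre, the set of odd-size elements, has exactly $2^{k-1}$ elements. A cycle of length $g$ is determined by its edge set, so distinct $g$-cycles give distinct elements of this fibre. Hence $c_g(H)\le 2^{k-1}$. I also note that $k\ge1$ automatically, since $H$ contains a cycle.

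For the lower bound, \cref{cor:many} gives $(g-1)!/2\le c_g(H)$. Combining the two bounds yields $(g-1)!\le 2^k$, which is the general claim.

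For the first sentence, suppose $H$ is integral, non-bipartite and triangle-free, so $g\ge5$. The general inequality then gives $2^k\ge 4!=24$, so $k\ge5$. Moreover, since $(g-1)!$ is increasing in $g$, larger odd girths only strengthen this bound. This contradicts $k\le4$, so $H$ must contain a triangle.

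No step is a serious obstacle. The only points needing care are that $H$ is assumed non-bipartite, so that the parity functional is nonzero, and that distinct cycles give distinct edge sets, which is immediate for simple graphs.
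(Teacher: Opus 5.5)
Your proposal is correct and follows the paper's proof: you sandwich $c_g(H)$ between the lower bound $(g-1)!/2$ from \cref{cor:many} and the upper bound $2^{k-1}$ from the parity functional on the cycle space. You then note that odd girth $g\ge5$ would force $24\le(g-1)!\le 2^k\le 16$ when $k\le4$, a contradiction.
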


\begin{proof}
The general inequality follows from $(g-1)!/2\le c_g(H)\le 2^{k-1}$. If $k\le4$ and $g\ge5$, then $24\le(g-1)!\le2^k\le16$, a contradiction. Hence $g=3$.
\end{proof}

For a unicyclic graph whose unique cycle has odd length $g$, \cref{prop:div} with $c_g=1$ gives $(g-1)!\mid2$, hence $g=3$, which is an alternative proof that the cycle in \cref{thm:main} is a triangle. Divisibility alone does not exclude trees attached to the triangle: attaching a pendant vertex to $C_3$ gives a non-integral graph with $2!\mid2c_3$. The spectral argument of \cref{sec:proof} excludes them through $\lambda_1(G)=2$. Likewise, \cref{cor:fewcycles} does not force $C_3$: the complete graph $K_4$ is integral, with cyclomatic number $3$ and four triangles. Finally, the divisibility statement fails for even cycles, as $C_4$ shows ($3!\nmid2$), because even moments also count closed walks that retrace edges; infinite families of integral unicyclic graphs with cycle length $4$ or $6$ are known~\cite{BDR,BMS}.

\section{Dynamical applications}\label{sec:applications}

For the continuous-time quantum walk $U(t)=e^{-\mathrm{i}tA(G)}$, each eigenvector of $A(G)$ evolves by the phase factor $e^{-\mathrm{i}t\lambda_i(G)}$. Following Godsil~\cite{Godsil}, we say that $G$ is \emph{periodic} if $U(T)$ is a scalar multiple of the identity for some $T>0$. In zero-sum rock--paper--scissors on a network, each vertex is a player whose mixed strategy follows the replicator equation with payoffs summed over its neighbours. Linearizing at the homogeneous equilibrium, where every player uses $(1/3,1/3,1/3)$, gives angular frequencies $\omega_i=|\lambda_i(G)|/\sqrt3$~\cite[Eq.~(2) and Section~4]{GSB}.   {Consequently}, all linear modes 
  {have} a common period precisely when the nonzero eigenvalues of $A(G)$ have rational ratios. For non-bipartite graphs, both this 
  {condition} and periodicity reduce to integrality.

\begin{proposition}\label{prop:commens}
For a connected non-bipartite graph $G$, the following are equivalent:
\textup{(i)} $G$ is periodic;
\textup{(ii)} all ratios of nonzero eigenvalues of $A(G)$ are rational;
\textup{(iii)} $G$ is integral.
\end{proposition}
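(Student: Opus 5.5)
We prove the cycle of implications (iii)$\Rightarrow$(i)$\Rightarrow$(ii)$\Rightarrow$(iii). Throughout, write $A=A(G)=\sum_\theta \theta E_\theta$ for the spectral decomposition over the distinct eigenvalues, so that $U(t)=\sum_\theta e^{-\mathrm{i}t\theta}E_\theta$.

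\emph{(iii)$\Rightarrow$(i).} If all eigenvalues are integers, then $U(2\pi)=\sum_\theta E_\theta=I$. Hence $G$ is periodic with $T=2\pi$.

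\emph{(i)$\Rightarrow$(ii).} Suppose $U(T)=\gamma I$ for some $T>0$. Applying $U(T)$ to a vector in each eigenspace shows that $e^{-\mathrm{i}T\theta}=\gamma$ for every eigenvalue $\theta$. Consequently $T(\theta-\theta')\in2\pi\mathbb Z$ for all pairs of eigenvalues. This is the point where non-bipartiteness enters. By (P2), $\lambda_n>-\lambda_1$, and since $\tr A=0$ the eigenvalues are not all equal. Fix one nonzero difference $\delta=\theta_0-\theta_1$. Then every difference $\theta-\theta'$ is a rational multiple of $\delta$. The key step is to extract the eigenvalues themselves from these differences. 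Since $\sum_i\lambda_i=0$, summing the differences $\lambda_i-\lambda_1$ over $i$ gives $-n\lambda_1$. So $\lambda_1$ is a rational multiple of $\delta$, and hence so is every $\lambda_i=\lambda_1+(\lambda_i-\lambda_1)$. Therefore all nonzero eigenvalues are rational multiples of one another, which is (ii). Note that this step does not yet use non-bipartiteness.

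\emph{(ii)$\Rightarrow$(iii).} This is the main step, and it is the only place where non-bipartiteness is needed: $C_4$-free bipartite examples such as the path $P_2$ with eigenvalues $\pm\sqrt2\cdot$ scaling show that (ii) can hold without integrality. By (ii), every eigenvalue has the form $\lambda_i=q_i\lambda_1$ with $q_i\in\mathbb Q$. The characteristic polynomial is monic with integer coefficients, so each $\lambda_i$ is an algebraic integer, and every Galois conjugate of $\lambda_1$ is again an eigenvalue. Let $\sigma$ be a field automorphism of the splitting field. Since $\sigma$ fixes rationals, it maps $\lambda_1$ to $\sigma(\lambda_1)=q\lambda_1$ for some rational $q$. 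Applying $\sigma$ to all eigenvalues permutes the spectrum and multiplies it by $q$. The spectral radius is preserved, so $|q|=1$, that is, $q=\pm1$. If $q=-1$, the spectrum would be invariant under negation. In particular $-\lambda_1$ would be an eigenvalue, contradicting (P2) for a connected non-bipartite graph. Hence $\sigma(\lambda_1)=\lambda_1$ for every $\sigma$, so $\lambda_1\in\mathbb Q$. Being an algebraic integer, $\lambda_1$ is then an integer. Each remaining eigenvalue $q_i\lambda_1$ is rational and an algebraic integer, so it is also an integer, and $G$ is integral.

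The hardest point is exactly this Galois argument. One must rule out $\sigma(\lambda_1)=-\lambda_1$, and the plan is to do so through the Perron--Frobenius asymmetry in (P2), which is where the non-bipartite hypothesis is used.
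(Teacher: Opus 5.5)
Your proof is correct. The implications (iii)$\Rightarrow$(i) and (i)$\Rightarrow$(ii) follow the paper's argument: all differences of eigenvalues lie in $(2\pi/T)\mathbb Z$, and $\tr A=0$ recovers $\lambda_1$ itself.

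The difference is in (ii)$\Rightarrow$(iii), at the step showing that every conjugate of $\rho=\lambda_1$ is $\pm\rho$.

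\begin{itemize}
\item You let an automorphism $\sigma$ of the splitting field act on the whole spectrum. Because every $\lambda_i$ is a rational multiple of $\lambda_1$, $\sigma$ multiplies all eigenvalues by a single rational $q$. Since $\sigma$ permutes the nonzero spectrum, $|q|=1$. You then need the Galois correspondence (an element fixed by every automorphism of the splitting field is rational) to conclude $\rho\in\mathbb Q$.
\item The paper avoids automorphisms. The product of the $d$ conjugates $c_j\rho$ is rational, so $\rho^d\in\mathbb Q$. Hence the minimal polynomial of $\rho$ is $x^d-\rho^d$, and $c_j^d=1$ forces $c_j=\pm1$. This uses only Vieta's formulas.
\end{itemize}

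Both arguments then exclude $-\rho$ by (P2) in the same way. Your version is more conceptual, since each automorphism acts on the spectrum as multiplication by $\pm1$. The paper's is more elementary.

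Two expository slips are worth fixing.

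\begin{itemize}
\item In (i)$\Rightarrow$(ii) you first say non-bipartiteness ``enters'' and then that it is not used. It is needed only to produce a nonzero difference $\delta$, and such a $\delta$ exists whenever $G$ has an edge.
\item $P_2$ has spectrum $\{1,-1\}$, so it is integral and cannot serve as your example. The bipartite example you want is $P_3$, with spectrum $\{\sqrt2,0,-\sqrt2\}$. It satisfies (ii), and is even periodic, without being integral.
\end{itemize}
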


\begin{proof}
If $G$ is integral, then $U(2\pi)=I$, so (iii) implies (i). Assume (i), with $U(T)=\gamma I$. Then $T(\lambda_1(G)-\lambda_i(G))\in2\pi\mathbb Z$ for every $i$, so $\lambda_1(G)-\lambda_i(G)=k_ic$, with $c=2\pi/T$ and $k_i\in\mathbb Z$. Summing over $i$ and using $\tr A=0$ gives $n\lambda_1(G)=c\sum_ik_i$, hence every eigenvalue lies in $c\,\mathbb Q$, which gives (ii).

Assume (ii) and let $\rho=\lambda_1(G)>0$, so that every nonzero eigenvalue lies in $\mathbb Q\rho$. Let $d$ be the degree of the minimal polynomial of $\rho$ over $\mathbb Q$. Its roots are eigenvalues of $A(G)$, since the characteristic polynomial has integer coefficients, and they are nonzero, so they have the form $c_j\rho$, with $c_j\in\mathbb Q\setminus\{0\}$. Their product is rational, so $\rho^d\in\mathbb Q$. Every conjugate $\beta=c_j\rho$ of $\rho$ therefore satisfies $\beta^d=\rho^d$, so $c_j^d=1$ and $\beta=\pm\rho$. If $\rho$ were irrational, then $-\rho$ would be a conjugate of $\rho$ and hence an eigenvalue, contradicting $\lambda_n(G)>-\rho$ from (P2). Thus $\rho$ is rational. Since $\rho$ is an eigenvalue of an integer matrix, it is an algebraic integer, and hence $\rho\in\mathbb Z$.
Every nonzero eigenvalue is 
  {therefore} a rational algebraic integer, hence an integer.   {Thus $G$ is integral, proving (iii)}. \end{proof}

The equivalence of (i) and (iii) also follows from Godsil's characterization of periodic graphs~\cite[Theorem~3.1 and Corollary~3.3]{Godsil}. Thus, \cref{thm:main} 
  {implies that the triangle is the only unicyclic graph with an odd cycle that is periodic or has commensurable linear frequencies.}
For $C_3$, these   {frequencies} are $1/\sqrt3$ and $2/\sqrt3$, so all linear modes repeat after time $2\pi\sqrt3$. These conclusions concern the linearized dynamics; the nonlinear game can exhibit chaos even on $C_3$~\cite{GSB}.

\section*{Declaration of generative AI and AI-assisted technologies in the writing process}

During the preparation of this work the authors used Claude (Anthropic) as a discussion partner for brainstorming and exploring ideas, to check proofs and the exposition for errors, and to edit the language and the \LaTeX{} source. After using this tool, the authors reviewed and edited the content as needed, verified all mathematical statements and proofs, and take full responsibility for the content of the publication.


\begin{thebibliography}{99}

\bibitem{ATV}
A. Abiad, V. Taranchuk, and T. van Veluw, \emph{On the sum of the largest and smallest eigenvalues of odd-cycle free graphs}, Electron. J. Combin. \textbf{33}(2) (2026), \#P2.31.

\bibitem{BDR}
R.~O. Braga, R.~R. Del-Vecchio, and V.~M. Rodrigues, \emph{Integral unicyclic graphs}, Linear Algebra Appl. \textbf{614} (2021), 281--300.

\bibitem{BMS}
R.~O. Braga, J.~C. Moraes, and M.~C. Santos, \emph{Pendant paths and integral generalized sun graphs}, arXiv:2609.28754 (2026).

\bibitem{BRT}
R.~O. Braga, V.~M. Rodrigues, and V. Trevisan, \emph{Locating eigenvalues of unicyclic graphs}, Appl. Anal. Discrete Math. \textbf{11} (2017), 273--298.

\bibitem{BH}
A.~E. Brouwer and W.~H. Haemers, \emph{Spectra of Graphs}, Universitext, Springer, New York, 2012.

\bibitem{Godsil}
C. Godsil, \emph{Periodic graphs}, Electron. J. Combin. \textbf{18}(1) (2011), \#P23.

\bibitem{GSB}
C. Griffin, J. Semonsen, and A. Belmonte, \emph{Generalized Hamiltonian dynamics and chaos in evolutionary games on networks}, Physica A \textbf{597} (2022), 127281.

\bibitem{HS}
F. Harary and A.~J. Schwenk, \emph{Which graphs have integral spectra?}, in: Graphs and Combinatorics (R.~A. Bari and F. Harary, eds.), Lecture Notes in Math. 406, Springer, Berlin, 1974, pp.~45--51.

\bibitem{HJ}
R.~A. Horn and C.~R. Johnson, \emph{Matrix Analysis}, 2nd ed., Cambridge University Press, Cambridge, 2013.

\bibitem{MM}
D. Madeo and C. Mocenni, \emph{Game interactions and dynamics on networked populations}, IEEE Trans. Automat. Control \textbf{60} (2015), 1801--1810.

\bibitem{MMS}
C. Mocenni, J.~C. Moraes, and M.~C. Santos, \emph{On the structure and stability of boundary mixed steady states in evolutionary games on networks}, arXiv:2606.20688 (2026).

\bibitem{Omidi}
G.~R. Omidi, \emph{On integral graphs with few cycles}, Graphs Combin. \textbf{25} (2009), 841--849.

\bibitem{SSS}
N. Saxena, S. Severini, and I.~E. Shparlinski, \emph{Parameters of integral circulant graphs and periodic quantum dynamics}, Int. J. Quantum Inf. \textbf{5} (2007), 417--430.

\end{thebibliography}
\end{document}